\newtheorem{theorem}{Theorem}
\newtheorem{proposition}[theorem]{Proposition}
\newtheorem{lemma}[theorem]{Lemma}
\newtheorem{corollary}[theorem]{Corollary}
\theoremstyle{definition}
\newtheorem{def-theorem}[theorem]{Definition-Theorem}
\newtheorem{remark}[theorem]{Remark}
\newtheorem{question}[theorem]{Question}
\newcommand{\be}{\begin{equation}}
\newcommand{\ee}{\end{equation}}
\newcommand{\bea}{\begin{eqnarray}}
\newcommand{\eea}{\end{eqnarray}}
\newcommand{\beas}{\begin{eqnarray*}}
\newcommand{\eeas}{\end{eqnarray*}}
\newcommand{\ba}{\begin{array}}
\newcommand{\ea}{\end{array}}
\newcommand{\bbG} {\mathbb{G}}		% natural numbers
\newcommand{\bbA} {\mathbb{A}}
\newcommand\INTO{\ar@{^{(}->}[r]}
\newcommand{\GL}{\operatorname{GL}}
\newcommand{\PGL}{\operatorname{PGL}}
\newcommand{\id}{\operatorname{id}}
\newcommand{\Sym}{\operatorname{\Sigma}}
\newcommand{\Spec}{\operatorname{Spec}}
\newcommand{\cO}{\mathcal{O}}
\newcommand{\presectionspace}{\vspace{0.2cm}} % ***** set this to zero
\begin{document}

\author{Zinovy Reichstein}
\address{Department of Mathematics\\University of British Columbia\\ BC, Canada V6T 1Z2}
\email{reichst@math.ubc.ca}
\thanks{Partially supported by
National Sciences and Engineering Research Council of
Canada Discovery grant 253424-2017.}

\title[Fields of cross-ratios]{On a rationality problem for fields of cross-ratios}

\keywords
{Rationality, Galois cohomology, the Noether problem, quaternion algebras, Galois algebras, Brauer group}

\subjclass[2010]{14E08, 12G05, 16H05, 16K50}
% {\em Mathematical Subject Classification (2010):}{14C25, 16K50, 20C05}
%%%%%%%%%%%%%%%%%%%%%%%%%%%%%%%%%%
% 12  View Publications (1959-now) Field theory and polynomials
% 12G  View Publications (1973-now) Homological methods (field theory)
% 12G05  View Publications (1973-now) Galois cohomology [See also 14F22, 16Hxx, 16K50]
% 12G10  View Publications (1973-now) Cohomological dimension
% 12G99  View Publications (1973-now) None of the above, but in this section
% 14E08 (2000-now) Rationality questions
% 14H View Publications (1973-now) Curves
% 14H10 View Publications (1973-now) Families, moduli (algebraic)
% 20G15 (1973-now) Linear algebraic groups over arbitrary fields
% 16  View Publications (1959-now) Associative rings and algebras [For the commutative case, see 13-XX]
% 16H  View Publications (1991-now) Algebras and orders [For arithmetic aspects, see 11R52, 11R54, 11S45; for representation theory, see 16G30]
% 16H05  View Publications (1991-now) Separable algebras (e.g., quaternion algebras, Azumaya algebras, etc.)
% 16K (1991-now) Division rings and semisimple Artin rings
% 16K50 (2000-now) Brauer groups
%%%%%%%%%%%%%%%%%%%%%%%%%%%

\begin{abstract}
 Let $k$ be a field, $n \geqslant 5$ be an integer, 
$x_1, \dots, x_n$ be independent variables and
$L_n = k(x_1, \dots, x_n)$.
The symmetric group $\Sym_n$ acts on $L_n$ by permuting the variables, and
the projective linear group $\PGL_2$ acts by applying (the same) fractional linear transformation 
to each varaible. The fixed field $K_n = L_n^{\PGL_2}$ is called ``the field of cross-ratios". Let $S \subset \Sym_n$
be a subgroup. The Noether Problem asks whether the field extension $L_n^S/k$ is rational, and the Noether Problem 
for cross-ratios asks whether $K_n^S/k$ is rational. In an effort to relate these two problems, 
H.~Tsunogai posed the following question: Is $L_n^S$ rational over $K_n^S$? He answered this question 
in several situations, in particular, in the case where $S = \Sym_n$. In this paper we extend his results
by recasting the problem in terms of Galois cohomology. Our main theorem asserts that the following conditions on 
a subgroup $S \subset \Sym_n$ are equivalent: (a) $L_n^S$ 
is rational over $K_n^S$, (b) $L_n^S$ is unirational over $K_n^S$, (c) $S$ has an orbit of odd order 
in $\{ 1, \dots, n \}$. 
\end{abstract}

\maketitle

\presectionspace
\section{Introduction}

Let $k$ be a base field, $n \geqslant 5$ be an integer, 
$x_1, \dots, x_n$ be independent variables, and
\[ L_n = k(x_1, \dots, x_n). \] 
The group $\PGL_2$ acts on $L_n$ 
via
\[ \begin{pmatrix} a & b \\ c & d \end{pmatrix} \cdot x_i \to 
\frac{a x_i + b}{c x_i + d} \]
for $i = 1, \dots, n$. The field of invariants 
$K_n = L_n^{\PGL_2}$ is 
generated over $k$ by the $n-3$ cross-ratios 
% \begin{equation} \label{e.cr}
\[ [x_1, x_2, x_3, x_i] = \text{$\dfrac{(x_i - x_1)(x_3 - x_2)}{(x_i - x_2)(x_3 - x_1)}$ , \quad 
 $i = 4, \dots, n$.} \] 
 % \end{equation}
For this reason we will refer to $K_n$ as the field of cross-ratios. 
The natural action of the symmetric group $\Sym_n$ on $L_n$ induced 
by permuting the variables descends to a faithful action on $K_n$. 
Suppose $S$ is a subgroup of $\Sym_n$. 

The Noether problem asks whether the fixed field $L_n^S$ is rational 
(respectively, stably rational or retract rational) over $k$.  The Noether Problem 
for cross-ratios is whether or not $K_n^S$ is rational (respectively, stably rational or retract 
rational) over $k$. In an effort to relate these two problems, H.~Tsunogai~\cite{tsunogai} 
posed the following question:

\begin{question} \label{q.main}
Is $L_n^S$ is rational over $K_n^S$?
\end{question}

He answered this question in several situations (see~\cite[Theorems 1, 2, 3]{tsunogai})
in particular, in the case, where $S = \Sym_n$.
Our main theorem generalizes his results as follows. 

\begin{theorem} \label{thm.main} 
Let $S$ be a subgroup of the symmetric group $\Sym_n$, where $n \geqslant 5$.
Then the following conditions are equivalent:

\begin{enumerate}
    \item[\rm{(a)}] $L_n^S$ is rational over $K_n^S$,
     \item[\rm{(a)}] $L_n^S$ is unirational over $K_n^S$,
     \item[\rm{(c)}] $S$ has an orbit of odd order in $\{ 1, \dots, n \}$.
\end{enumerate}
\end{theorem}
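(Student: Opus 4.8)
The plan is to reinterpret $L_n^S/K_n^S$ as a $\PGL_2$-torsor and to translate the trichotomy into a splitting criterion for the associated quaternion algebra. Because the $\PGL_2$- and $\Sym_n$-actions on $L_n$ commute, $(L_n^S)^{\PGL_2}=(L_n^{\PGL_2})^S=K_n^S$; fixing a model $V$ (an open $\PGL_2\times\Sym_n$-stable subset of $(\mathbb{P}^1)^n$ parametrizing $n$ distinct points, on which both group actions are free), the quotient $V/S\to (V/S)/\PGL_2$ is a $\PGL_2$-torsor whose function-field extension is $L_n^S/K_n^S$. It thus has a class $\alpha_S\in H^1(K_n^S,\PGL_2)$, equivalently a conic $C_S$ (or a quaternion algebra $Q_S$) over $K_n^S$. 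Two formal properties are needed. First, functoriality in $S$: for $S'\subseteq S$ one has $L_n^{S'}=L_n^S\otimes_{K_n^S}K_n^{S'}$ (since $S'$ acts trivially on $L_n^S$ and taking invariants commutes with this flat base change), so $\alpha_{S'}=\operatorname{res}^{K_n^{S'}}_{K_n^S}(\alpha_S)$; in particular $\alpha_S$ dies over $K_n$, consistent with $L_n=K_n(x_1,x_2,x_3)$ being rational over $K_n$. Second, $\alpha_S$ is trivial iff $C_S$ splits (the map $H^1(\PGL_2)\to\Br[2]$ is injective here, as $H^1(\GL_2)$ is trivial).

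With this, the equivalence reduces to the assertion: $C_S$ splits $\iff$ $S$ has an odd orbit. Indeed (a)$\Rightarrow$(b) is trivial; if $L_n^S$ is unirational over $K_n^S$, a $K_n^S$-embedding $L_n^S\hookrightarrow F:=K_n^S(t_1,\dots,t_m)$ gives an $F$-point of the torsor $\alpha_S$, so $C_S\otimes F$ has a rational point and splits, whence $[C_S]=0$ in $\Br(K_n^S)$ because $\Br(K_n^S)\hookrightarrow\Br(F)$ for a purely transcendental extension; conversely if $C_S$ splits then $\alpha_S$ is trivial and $L_n^S=K_n^S(\PGL_2)$ is rational over $K_n^S$ since $\PGL_2$ is a rational variety.

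For (c)$\Rightarrow$ ``$C_S$ splits'': let $O$ be an orbit of odd size. Each $x_j\colon V\to\mathbb{P}^1$ is $\PGL_2$-equivariant, hence a section of the conic bundle $V\times^{\PGL_2}\mathbb{P}^1\to V/\PGL_2$; the union of the $|O|$ sections indexed by $O$ is $S$-stable, so it descends to an effective divisor $D_O$ on $C_S$ defined over $K_n^S$ of odd degree $|O|$. Since the index of a conic divides the degree of every zero-cycle and is at most $2$, $C_S$ has index $1$ and splits, so $L_n^S$ is rational over $K_n^S$.

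The remaining implication ($\neg$(c)$\Rightarrow\neg$(b)) is the heart of the matter: assuming all $S$-orbits are even, one must show $C_S$ is not split. A first reduction: a Sylow $2$-subgroup $P\le S$ has no fixed point on $\{1,\dots,n\}$ (a fixed point would lie in an $S$-orbit of odd order), so all $P$-orbits — of $2$-power size — are even; by functoriality $\alpha_P=\operatorname{res}(\alpha_S)$, so it suffices to show $C_P$ is non-split, i.e. we may assume $S$ is a $2$-group acting without fixed points. Then one computes $[C_S]$ explicitly: over $K_n$ the torsor trivializes, $C_S\otimes_{K_n^S}K_n\cong\mathbb{P}^1_{K_n}$, and the semilinear $S$-action is $s\cdot z=g_s(z^{\sigma_s})$ for the fractional linear map $g_s\in\PGL_2(K_n)$ carrying the marked points $z_j^{\sigma_s}$ to $z_{s(j)}$; lifting the $g_s$ to $\GL_2(K_n)$ gives a $2$-cocycle representing $[C_S]\in\Br(K_n/K_n^S)$. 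One shows this class is nonzero by producing a discrete valuation $v$ of $K_n^S$ — most naturally one arising from a boundary divisor of a suitable $S$-equivariant compactification of the configuration space — at which the residue of $[C_S]$ in $\kappa(v)^\times/(\kappa(v)^\times)^2$ is a non-square, the parity hypothesis being exactly what prevents this ramification from cancelling. For instance, when $n=6$ and $S=\langle(12)(34)(56)\rangle$ this yields $[C_S]=(p^2-4q,\,t_4)$ over $K_6^S=k(t_4,p,q)$, which is nonzero because its residue along $\{p^2-4q=0\}$ is $t_4$, a non-square. The main obstacle is carrying out this non-vanishing uniformly for every $2$-group acting without fixed points: an inductive descent along index-$2$ subgroups reduces to ``irreducible'' cases such as $V_4$ acting on six points (where every proper subgroup has a fixed point), which must be treated by a direct computation, and characteristic $2$ requires a separate Artin–Schreier-type argument.
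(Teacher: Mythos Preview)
Your setup and the easy directions are correct and, in places, cleaner than the paper's. In particular, your proof of (c)$\Rightarrow$``$C_S$ splits'' via the odd-degree effective divisor on the conic is a genuinely nice geometric argument that the paper does not give; the paper instead passes through a description of $H^1(K,{}_\tau G)$ in terms of quaternion algebras split by an associated \'etale algebra and argues that a fixed point forces $L^{S_1}=K$, which cannot split a division algebra. Your reduction to a Sylow $2$-subgroup is also the same as the paper's.

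The gap is exactly where you flag it: the implication ``all $S$-orbits even $\Rightarrow$ $C_S$ non-split''. Your proposed strategy --- compute $[C_S]\in\Br(K_n^S)$ by writing down the $2$-cocycle $\{g_s\}$ and detect nontriviality via a residue at a boundary divisor --- is not carried out, and as you note, an induction along index-$2$ subgroups breaks down precisely because passing to $S'\subset S$ can kill the class (your $V_4$ example). Treating all ``irreducible'' $2$-groups by hand, and redoing the computation in characteristic $2$, is not a proof but a program.

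The paper sidesteps this entirely by exploiting \emph{versality}. The $G_S$-torsor $T_S$ arising from the linear action on $(\mathbb{A}^2)^n$ is versal, so $[t_S]=\beta_1([T_S])$ is trivial in $H^1(K_n^S,\PGL_2)$ if and only if $\beta_1\colon H^1(K,G_S)\to H^1(K,\PGL_2)$ is trivial for \emph{every} field $K/k$. This converts the problem from ``show one specific Brauer class over the complicated field $K_n^S$ is nonzero'' into ``produce, over \emph{some} field $K$, an $S$-Galois algebra $L/K$ and a quaternion division algebra $A/K$ split by each $L^{S_i}$''. Since $S$ is a $2$-group without fixed points, each $S_i$ sits in an index-$2$ subgroup $H_i\lhd S$; starting from any $S$-Galois field extension $M/F$, the theorem of Van den Bergh--Schofield supplies a field $K\supset F$ and a quaternion division algebra $A/K$ containing each quadratic extension $M^{H_i}\otimes_F K$ as a maximal subfield, hence split by each $L^{S_i}$ where $L=M\otimes_F K$. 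This is uniform in $S$ and characteristic-free. The missing idea in your proposal is precisely this versality step, which frees you from computing on $K_n^S$ itself.
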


The remainder of this note will be devoted to proving Theorem~\ref{thm.main}.
% Our proof of this result will rely on cohomological techniques. % In the last section we will address the case $n = 4$.

\section{Recasting the problem in the language of Galois cohomology}
\label{sect.recast}

Let $G$ be the subgroup of $(\GL_2)^n = \GL_2 \times \dots \times \GL_2$ consisting of $n$-tuples $(g_1, \dots, g_n)$ 
such that $\overline{g_1} = \dots = \overline{g_n}$. Here $\overline{g}$ denotes the image of $g \in \GL_2$ in $\PGL_2$.
In other words, $(g_1, \dots, g_n) \in (\GL_2)^n$ lies in $G$ if and only if $g_1, \dots, g_n$ are scalar multiples of each other.
The symmetric group $\Sym_n$ acts naturally on $(\GL_2)^n$ by permuting the entries; $G$ is invariant under this action.
For any subgroup $S \subset \Sym_n$, we will denote the semidirect product $G \rtimes S$ by $G_S$. This gives rise to 
the natural split exact sequence 
\begin{equation} \label{e.seq1} \xymatrix{  1 \ar@{->}[r] & G \ar@{->}[r]^i & G_S \ar@{->}[r]^{\phi} & S  % \ar@{->}@/^1pc/[l]^{j} 
\ar@{->}[r] & 1}
\end{equation}
We will also be interested in another exact sequence,
\begin{equation} \label{e.seq2} \xymatrix{  1 \ar@{->}[r] & (\bbG_m^n) \rtimes S \ar@{->}[r]^{\quad \alpha} & G_S \ar@{->}[r]^{\beta} & \PGL_2 \ar@{->}[r] & 1},
\end{equation}
where map $G \to \PGL_2$ sends $(g_1, \dots, g_n) \in G$ to $\overline{g_1} = \dots = \overline{g_n}$.

Consider the natural linear action of $G_S$ on the $2n$-dimensional affine space $V = (\bbA^2)^n$ defined as follows:
$(g_1, \dots, g_n) \in G$ acts on $(\bbA^2)^n$ by
\[ (g_1, \dots, g_n) \, \colon \, (v_1, \dots, v_n) \mapsto (g_1 v_1, \dots, g_n v_n) \]
and $\sigma \in S \subset \Sym_n$ by
\[ \sigma \, \colon \, (v_1, \dots, v_n) \mapsto (v_{\sigma(1)}, \dots, v_{\sigma(n)}) . \]
One readily checks that this action is generically free. (Recall that our standing assumption is that $n \geqslant 5$.)
That is, $V$ has a dense $G$-invariant Zariski open subset $V_0$, such that the stabilizer of $v$ in $G_S$ is trivial
for every $v \in V_0$. After passing to a smaller $G_S$-invariant open subset, we may assume that $V_0$ is the total space of
a $G_S$-torsor $T_S \colon V_0 \to Z_S$ for some $k$-variety $Z_S$; see~\cite[Theorem 4.7]{berhuy-favi}. We thus obtain the following diagram:
\[  \xymatrix{
% (V_0)_{\eta}    \ar@{->}[r]^{  \ar@{->}[d]_{\text{(\bbG_m^n \rtimes S)$-torsor}}     & 
 & V_0 \ar@{->}[d]_{\text{$(\bbG_m^n \rtimes S)$-torsor}} \ar@{->}@/^1.5pc/[dd]^{\text{$T_S$, a $G_S$-torsor}}   \\
 & Y_S  \ar@{->}[d]_{\text{$t_S$, a $\PGL_2$-torsor}} \\
\eta \ar@{->}[r]                               & 
Z_S } \]
where $Y_S = V_0/(\bbG_m^n \rtimes S)$. The function fields $k(Z_S)$ and $k(Y_S)$ are 
naturally isomorphic to $L_n^S$ and $K_n^S$, respectively. 
When we pass to the generic point $\eta$ of $Z_S$, $T_S$ gives rise to a $G_S$-torsor $(V_0)_{\eta} \to \Spec(K_n^{S})$
and $t_S$ to a $\PGL_2$-torsor $(Y_S)_{\eta} \to \Spec(K_n^S)$, respectively.
By abuse of notation we will continue to denote these torsors by $T_S$ and $t_S$. 

Now let $K$ is an arbitrary field. Recall that $G_S$-torsors over $\Spec(K)$ are classified by the Galois cohomology set $H^1(K, G_S)$, and 
$\PGL_2$-torsors are classified by $H^1(K, \PGL_2)$; see~\cite[\S I.5.2]{serre-gc}. 
We will denote the classes of $T_S$ and $t_S$ by $[T_S] \in H^1(K_n^S, G_S)$ and $[t_S] \in H^1(K_n^S, \PGL_2)$, 
respectively. The exact sequences~\eqref{e.seq1} and~\eqref{e.seq2} of algebraic groups 
give rise to exact sequences of Galois cohomology sets
\begin{equation} \label{e.long1}
\xymatrix{H^1(K, G) \ar@{->}[r]^{i_1} & H^1(K, G_S) \ar@{->}[r]^{\phi_1} &
H^1(K, S) } 
\end{equation}
and
\begin{equation} \label{e.long2}
\xymatrix{H^1(K, \bbG_m^n \rtimes S) \ar@{->}[r]^{\; \; \; \alpha_1} & H^1(K, G_S) \ar@{->}[r]^{\beta_1 \; \;} &
H^1(K, \PGL_2)  } 
\end{equation} 
for any field $K$. If $K = K_n^S$, then by our construction $[t_S] = \beta_1([T_S])$. The following proposition recasts Question~\ref{q.main} in the language
of Galois cohomology.

\begin{proposition} \label{prop.gc} The following conditions on a subgroup $S \subset \Sym_n$ are equivalent:

\smallskip
(a) $L_n^S$ is rational over $K_n^S$,

\smallskip
(b) $L_n^S$ is unirational over $K_n^S$,

\smallskip
(c) $[t_S]$ is the trivial class in $H^1(K_n^S, \PGL_2)$, 

\smallskip
(d) $\beta_1 \colon H^1(K, G_S) \to H^1(K, \PGL_2)$ is the trivial for every field $K$ containing $k$,

\smallskip
(e) $\alpha_1 \colon H^1(K, \bbG_m^n \rtimes S) \to H^1(K, G_S)$ is surjective for every field $K$ containing $k$,

\smallskip
(f) $\phi_1 \colon H^1(K, G_S) \to H^1(K, S)$ is bijective for every field $K$ containing $k$,

\smallskip
(g) $H^1(K, {\, } _{\tau} G) = 1$ for every $\tau \in H^1(K, S)$.
\end{proposition}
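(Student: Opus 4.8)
The plan is to establish a cycle of implications linking the seven conditions, using the geometric dictionary set up above together with general facts about torsors and twisting. The core observation is that rationality statements about the covers $V_0 \to Y_S \to Z_S$ translate directly into splitting statements for the torsors $T_S, t_S$, and that these ``at the generic point'' statements propagate to all fields because $Z_S$ is a versal object for $G_S$-torsors (indeed $V$ is a generically free linear representation, so $T_S$ is a versal $G_S$-torsor in the sense of \cite{berhuy-favi}). Concretely, I would first prove $(a) \Rightarrow (b) \Rightarrow (c)$: the first is trivial, and for the second, if $L_n^S$ is unirational over $K_n^S$ then the generic fibre of $t_S$ acquires a rational point after a purely transcendental — hence ``unirational'' in the appropriate sense — base extension; but a $\PGL_2$-torsor that splits over $K(t_1,\dots,t_m)$ already splits over $K$, since a Severi–Brauer variety (or quaternion algebra) with a rational point over a rational extension has one over the ground field (specialization / the fact that $\Br(K) \to \Br(K(t))$ is injective). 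Actually the cleaner route is: unirationality of $V_0$ over $Y_S$ gives a rational section of $t_S$ over a dense open, which forces $[t_S] = 1$ directly.

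Next I would run the equivalences among the ``cohomological'' conditions $(c)$–$(g)$. The implication $(d) \Rightarrow (c)$ is immediate by specialization to $K = K_n^S$. For $(c) \Rightarrow (d)$ one uses versality: since $[T_S]$ is a versal $G_S$-torsor, every class in $H^1(K, G_S)$ is a specialization of $[T_S]$ (after base change), and $\beta_1$ commutes with specialization, so $\beta_1([T_S]) = [t_S] = 1$ forces $\beta_1$ to be trivial everywhere. The equivalence $(d) \Leftrightarrow (e)$ is just exactness of \eqref{e.long2} (the image of $\alpha_1$ is the kernel of $\beta_1$, i.e.\ the fibre over the trivial class, so $\beta_1$ trivial $\iff$ every class is in the image of $\alpha_1$). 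For $(d) \Leftrightarrow (f)$: from \eqref{e.long1}, $\phi_1$ is always surjective because the sequence \eqref{e.seq1} is split, so bijectivity of $\phi_1$ amounts to injectivity, which by the standard twisting formalism (\cite[\S I.5.5]{serre-gc}) is equivalent to $\ker(\phi_{1,\tau})$ being trivial for the $\tau$-twisted sequence for every $\tau$; and $\ker(\phi_{1,\tau}) = \mathrm{im}(i_{1,\tau})$ is the image of $H^1(K, {}_\tau G)$, so this is the content of $(g)$. Finally, to close the loop $(c) \Rightarrow (a)$ (or $(g) \Rightarrow (a)$): knowing $[t_S] = 1$ means $t_S$ has a rational section, so $Y_S$ is birational to $Z_S \times \PGL_2$ over $Z_S$; since $\PGL_2$ is a rational variety, $k(Y_S) = K_n^S$ is rational over $k(Z_S) = L_n^S$ — wait, the direction needed is the reverse, so one instead uses that $V_0 \to Z_S$ has a section after the $\PGL_2$-torsor splits, making $L_n^S$ rational over $K_n^S$ via the rationality of $\bbG_m^n \rtimes S$-torsors combined with that section; I would phrase this as: $(e)$ surjective means $[T_S]$ lifts a class from $H^1(K_n^S, \bbG_m^n \rtimes S)$, and such classes give $Z_S$ birational over $Y_S$ to a vector-bundle-type construction, yielding rationality of $L_n^S$ over $K_n^S$.

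The main obstacle I anticipate is making the passage from ``generic'' statements at $\eta$ to ``all fields $K$'' fully rigorous — that is, the versality argument in $(c) \Rightarrow (d)$ and its analogues. One must be careful that $V$ being a generically free linear $G_S$-representation really does yield a versal torsor whose specializations exhaust $H^1(K, G_S)$ for all $K \supseteq k$, and that all four maps $i_1, \phi_1, \alpha_1, \beta_1$ are compatible with the specialization maps; this is where the hypothesis $n \geqslant 5$ (ensuring generic freeness) is used. The rest — exactness of the cohomology sequences, the twisting identification $\ker \phi_1 \leftrightarrow \mathrm{im}\, i_1$, and the elementary rationality inputs ($\PGL_2$ and tori with permutation action are rational, Brauer groups inject under rational extensions) — is standard and I would only sketch it. A secondary point requiring care is the precise birational model making ``$[t_S]$ trivial'' equivalent to ``$L_n^S$ rational over $K_n^S$'' rather than merely ``$K_n^S$ rational over $L_n^S$''; I expect to resolve this by working with the tower $V_0 \to Y_S \to Z_S$ and noting that a section of $t_S$ trivializes the upper $(\bbG_m^n \rtimes S)$-torsor structure in a way that exhibits $V_0$, hence $L_n$, as rational over $Z_S$.
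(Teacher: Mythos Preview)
Your overall strategy is the paper's: (a)$\Rightarrow$(b)$\Rightarrow$(c) geometrically, (c)$\Leftrightarrow$(d) via versality of $T_S$, (d)$\Leftrightarrow$(e) by exactness of \eqref{e.long2}, and (f)$\Leftrightarrow$(g) by twisting the split sequence \eqref{e.seq1}. Two places, however, do not go through as written.

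\textbf{The step (c)$\Rightarrow$(a).} You have the function fields of $Y_S$ and $Z_S$ swapped. Since $Y_S = V_0/(\bbG_m^n \rtimes S)$ and $V_0/\bbG_m^n$ is birational to $(\bbP^1)^n$, one has $k(Y_S) = L_n^S$; the further $\PGL_2$-quotient gives $k(Z_S) = K_n^S$. With the correct labels, (c)$\Rightarrow$(a) is a one-liner: $[t_S]=1$ makes $Y_S$ birational to $Z_S \times \PGL_2$ over $Z_S$, hence $L_n^S = k(Y_S)$ is rational over $K_n^S = k(Z_S)$. Your detour through a section of $V_0 \to Z_S$ and ``vector-bundle-type'' $(\bbG_m^n \rtimes S)$-torsors is unnecessary and, as stated, does not produce the conclusion.

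\textbf{The link to (f) and (g).} The argument you label ``$(d)\Leftrightarrow(f)$'' in fact proves (f)$\Leftrightarrow$(g); nothing you wrote connects the block $\{(c),(d),(e)\}$ to $\{(f),(g)\}$. The paper closes this via (e)$\Leftrightarrow$(f): the composite $S \xrightarrow{s} \bbG_m^n \rtimes S \xrightarrow{\alpha} G_S \xrightarrow{\phi} S$ is the identity, and $s_1$ is a \emph{bijection} because for every $\gamma \in H^1(K,S)$ the twist ${}_\gamma(\bbG_m^n)$ is a quasi-trivial torus, so $H^1(K,{}_\gamma(\bbG_m^n))=1$ by Shapiro/Hilbert~90. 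Given that $s_1$ is bijective and $\phi_1\circ\alpha_1\circ s_1 = \id$, one checks immediately that $\alpha_1$ is surjective iff $\phi_1$ is bijective. You mention ``tori with permutation action'' in passing, but never invoke this fact where it is actually needed.

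One smaller point: in (c)$\Rightarrow$(d), versality only guarantees that classes in $H^1(K,G_S)$ specialize from $[T_S]$ when $K$ is infinite. The paper treats finite $K$ separately, using Wedderburn's theorem to conclude $H^1(K,\PGL_2)=1$ outright.
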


In part (g), $_\tau G$ denotes the twist of $G$ by $\tau$ via the natural permutation action of $S$ on $G$.
For generalities on the twisting operation, see~\cite[Section I.5.3]{serre-gc} or~\cite[Section II.5]{berhuy}. 
Note in particular that $_\tau G$ is an algebraic group over $K$; it does not descend to $k$ in general.

\begin{remark} \label{rem.quaternion} The Galois cohomology set $H^1(K, \PGL_2)$ is in a natural
(i.e., functorial in $K$) bijective correspondence with the set of isomorphism classes of
quaternion algebras over $K$; see~\cite[\S I.2 and I.3]{serre03}. Thus condition (c) amounts to saying 
that a certain quaternion algebra over $K_n^S$ is split.
\end{remark}

We defer the proof of Proposition~\ref{prop.gc} to Section~\ref{sect.proof-gc}.

\section{Generalities on Galois cohomology}
\label{sect.generalities}

Suppose $i \colon A \to B$ is a morphism of algebraic groups over $k$, and 
$K$ is a field containing $k$. Following the notational conventions of the previous section,
we will denote the induced map $H^d(K, A) \to H^d(K, B)$ of cohomology sets by $i_d$. Here $d = 0$ or $1$.

The following lemma will be used in the proof of Proposition~\ref{prop.gc}.

\begin{lemma} \label{lem.prel-gc} Consider the exact sequence
\begin{equation} \label{e.seq3} \xymatrix{1 \ar@{->}[r] & A \ar@{->}[r]^{i \;} & B \ar@{->}[r]^{\; \pi} & C  \ar@{->}@/^1pc/[l]^{s} \ar@{->}[r] & 1}
\end{equation}
of smooth algebraic groups over a field $k$. Then

\smallskip
(a) The map $\pi_1 \colon H^1(K, B) \to H^1(K, C)$ is surjective for every field $K/k$.

\smallskip
(b) $\pi_1$ is injective if and only if $H^1(K, {\, }_{s_1(\gamma)} A) = 1$ for every $\gamma \in H^1(K, C)$.
\end{lemma}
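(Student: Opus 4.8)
The plan is to derive both parts from the exact cohomology sequence of pointed sets attached to \eqref{e.seq3} together with Serre's twisting formalism (\cite[\S I.5]{serre-gc}); smoothness of $A,B,C$ is exactly what guarantees that $1$-cocycles valued in $C$ lift to cochains valued in $B$, so that the usual exactness and twisting statements are available over every field $K/k$. Part (a) is then immediate: applying $H^1(K,-)$ to $\pi\circ s=\mathrm{id}_C$ gives $\pi_1\circ s_1=\mathrm{id}_{H^1(K,C)}$, so $s_1$ is a right inverse of $\pi_1$, and in particular $\pi_1$ is surjective.

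For part (b) I would first isolate the one place where the section is used, namely the following general fact: for \emph{any} split exact sequence $1\to A\xrightarrow{i}B\xrightarrow{\pi}C\to 1$ of smooth algebraic groups over a field $F$, the map $i_1\colon H^1(F,A)\to H^1(F,B)$ has trivial kernel (its fibre over the base point is a single point). This is a one-line cocycle computation: if $\sigma\mapsto i(a_\sigma)$ is a coboundary $b_0^{-1}\,{}^\sigma b_0$ in $B$, then $\pi(b_0)\in C(F)$, so $s(\pi(b_0))$ is Galois-invariant and $b_0':=s(\pi(b_0))^{-1}b_0$ lies in $A(F^{\mathrm{sep}})$ and exhibits $(a_\sigma)$ as a coboundary already in $A$.

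Next I would run the standard twisting argument over $K$. Fix $\gamma\in H^1(K,C)$ and a cocycle $z$ representing it. Since $s$ is a group homomorphism, $C$ acts compatibly on all three terms of \eqref{e.seq3} — on $A$ and $B$ by conjugation through $s$, on $C$ by inner automorphisms — and $i,\pi,s$ are $C$-equivariant; twisting by $z$ therefore yields an exact sequence $1\to {}_zA\to {}_zB\xrightarrow{{}_z\pi}{}_zC\to 1$ which is again split. By construction of these $C$-actions one checks that ${}_zA\cong {}_{s_1(\gamma)}A$, while the twisting bijection $H^1(K,B)\xrightarrow{\sim}H^1(K,{}_zB)$ sends $s_1(\gamma)$ to the base point and, being compatible with the analogous bijection for $C$, identifies the fibre $\pi_1^{-1}(\gamma)$ with $\ker\big({}_z\pi_1\colon H^1(K,{}_zB)\to H^1(K,{}_zC)\big)$; by exactness of the twisted sequence the latter is the image of $H^1(K,{}_zA)\to H^1(K,{}_zB)$ (\cite[\S I.5.4--5.5]{serre-gc}).

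Now both implications drop out. If $H^1(K,{}_{s_1(\gamma)}A)=1$ for every $\gamma$, then each $\pi_1^{-1}(\gamma)$ is a single point, and since every class of $H^1(K,C)$ has this form by (a), $\pi_1$ is injective. Conversely, if $\pi_1$ is injective then $\pi_1^{-1}(\gamma)=\{s_1(\gamma)\}$ for every $\gamma$, so the image of $H^1(K,{}_zA)\to H^1(K,{}_zB)$ is just the base point; applying the general fact of the second paragraph to the split twisted sequence, this same map also has trivial kernel, whence $H^1(K,{}_{s_1(\gamma)}A)=1$. The only genuinely delicate step is the bookkeeping in the twist — verifying that the $C$-action used to twist \eqref{e.seq3} is precisely the one defining ${}_{s_1(\gamma)}A$ and that the twisted sequence retains a section — so that the trivial-kernel fact can be reapplied after twisting; everything else is formal.
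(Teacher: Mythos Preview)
Your proof is correct and follows the same overall strategy as the paper: twist \eqref{e.seq3} by $\tau=s_1(\gamma)$, note that the twisted sequence is again split, and exploit the surviving section. The only genuine difference is in how the section is used for the converse implication in (b). The paper invokes \cite[Corollary I.5.5.2]{serre-gc} to identify $\pi_1^{-1}(\gamma)$ with the orbit space of $H^0(K,{}_\tau C)$ acting on $H^1(K,{}_\tau A)$, and then observes that the section makes ${}_\tau\pi_0$ surjective, so the connecting map $\delta$ is trivial and the action is trivial; this yields $\pi_1^{-1}(\gamma)\cong H^1(K,{}_\tau A)$ directly, handling both directions at once. You instead identify the fiber with the \emph{image} of ${}_\tau i_1$ and supply a separate cocycle lemma (using the section) showing that ${}_\tau i_1$ has trivial kernel, so that ``image is a point'' forces the domain to be a point. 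Both routes are standard and short; the paper's is slightly more economical, while yours makes the role of the splitting in the converse direction more visibly hands-on.
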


\begin{proof} (a) is clear, since $s_1 \colon H^1(K, C) \to H^1(K, B)$ is a section for $\pi_1$.
To prove (b), twist the exact sequence~\eqref{e.seq3} 
by $\tau = s_1(\gamma)$ to obtain a new exact sequence
\[ \xymatrix{  1 \ar@{->}[r] & { \, }_{\tau} A \ar@{->}[r] & _{\tau} B \ar@{->}[r]^{_{\tau} \pi} & {\, }_{\tau} C  \ar@{->}@/^1pc/[l]^{_\tau s} \ar@{->}[r]
&  1 } \]
of algebraic groups over $K$ and consider the associated long exact sequence
\begin{equation} \label{e.las}
\xymatrix{  H^0(K, {\, }_{\tau} B) \ar@{->}[r]^{{_\tau \pi}_0} & 
H^0(K, {\, }_{\tau} C) \ar@{->}@/^1pc/[l]^{{_\tau s}_0} \ar@{->}[r]^{\delta} & H^1(K, {\, }_{\tau} A) 
\ar@{->}[r]^{{_\tau i}_1} & H^1(K, {\, }_{\tau} B) \ar@{->}[r]^{{_\tau \pi}_1} &
H^1(K, {\, }_{\tau}  C) \ar@{->}@/^1pc/[l]^{{_\tau s}_1} } 
\end{equation}
in cohomology. Note that $_{\tau} C$ is naturally isomorphic to $_{\gamma} C$.
By~\cite[Corollary I.5.5.2]{serre-gc}, the fiber of $\gamma$ under $\pi_1$ is in bijective correspondence with the set of $H^0(K, {\, }_{\tau} C)$-orbits in
$H^1(K, {\, }_{\tau} A)$. We are interested in the case, where $\pi_1$ is injective, i.e., this fiber is trivial for every $\gamma \in H^1(K, C)$.

Since ${_\tau s}_0$ is a section for ${_\tau \pi}_0$, we see that ${_\tau \pi}_0$ is surjective. Thus the connecting map $\delta$ in
the long exact sequence~\eqref{e.las}  
sends every element of $H^0(K, {\, }_{\tau} C)$ to the trivial element of $H^1(K, {\, }_{\tau} A)$. Consequently, $H^0(K, {\, }_{\tau} C)$ acts trivially on 
$H^1(K, {\, }_{\tau} A)$. 
We conclude that the fiber of $\gamma$ under $\pi_1$ is in bijective correspondence with $H^1(K, {\, }_{\tau} A)$. In particular,
$\pi_1$ is injective if and only if $H^1(K, {\, }_{\tau} A) = 1$ for every $\gamma$, as claimed.
\end{proof}

\begin{corollary} \label{cor.prel-gc} 
Consider the split exact sequence
\begin{equation} 
\label{e.seq4} 
\xymatrix{  1 \ar@{->}[r] & \bbG_m^n \ar@{->}[r]^{i \; \; \; \; } & \bbG_m^n \rtimes S \ar@{->}[r]^{\; \; \; \; \pi} & S \ar@{->}[r] \ar@{->}@/^1pc/[l]^{s} \ar@{->}[r] & 1}, 
\end{equation}
where $S$ is a subgroup of $\Sym_n$, and $\bbG_m^n \rtimes S$ is the semidirect product with respect to the natural
(permutation) action of $S$ on $\bbG_m^n$. Then $\pi$ and $s$ induce mutually inverse bijections
$\pi_1 \colon H^1(K, \bbG_m^n \rtimes S) \to H^1(K, S)$ and $s_1 \colon H^1(K, S) \to H^1(K, \bbG_m \rtimes S)$ for every
field $K$ containing $k$.
\end{corollary}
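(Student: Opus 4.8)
The plan is to deduce Corollary~\ref{cor.prel-gc} directly from Lemma~\ref{lem.prel-gc} applied to the split exact sequence~\eqref{e.seq4}. Part (a) of the lemma already tells us that $\pi_1 \colon H^1(K, \bbG_m^n \rtimes S) \to H^1(K, S)$ is surjective and that $s_1$ is a section for it, so the only thing left is to check injectivity of $\pi_1$. By part (b) of the lemma, injectivity is equivalent to the vanishing $H^1(K, {\,}_{s_1(\gamma)} \bbG_m^n) = 1$ for every $\gamma \in H^1(K, S)$. Once injectivity is established, $\pi_1$ and $s_1$ are mutually inverse bijections, which is exactly the assertion.

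So the crux is the computation of $H^1(K, {\,}_{\tau} \bbG_m^n)$, where $\tau = s_1(\gamma)$ and the twisting is via the permutation action of $S$ on $\bbG_m^n$. First I would identify the twisted group: twisting $\bbG_m^n$ by a cocycle valued in $S$ acting by coordinate permutations produces, over the separable closure, the same torus $\bbG_m^n$, but with Galois action twisted so that $\operatorname{Gal}(k_s/K)$ permutes the factors through the cocycle $\tau$. Concretely, ${\,}_{\tau}\bbG_m^n$ is a quasi-split torus: it decomposes as a product $\prod_j R_{E_j/K}(\bbG_m)$ of Weil restrictions of $\bbG_m$ along the finite étale $K$-algebras $E_j$ corresponding to the orbits of the image of $\tau$ acting on $\{1, \dots, n\}$. (Equivalently, $\tau$ corresponds to a homomorphism of the absolute Galois group into $S \subset \Sym_n$, hence to an étale algebra $E = \prod_j E_j$ of degree $n$, and ${\,}_\tau \bbG_m^n \cong R_{E/K}(\bbG_m)$.)

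The vanishing then follows from Hilbert's Theorem~90 together with Shapiro's lemma: $H^1\bigl(K, R_{E/K}(\bbG_m)\bigr) \cong H^1(E, \bbG_m) = \prod_j H^1(E_j, \bbG_m) = \prod_j \operatorname{Pic}(E_j) = 0$, since each $E_j$ is a field. I would cite the standard references for Shapiro's lemma in Galois cohomology (e.g.~\cite[\S I.2.5]{serre-gc} or the corresponding discussion in~\cite{berhuy}) and for Hilbert~90. This step is the main (and essentially only) substantive point; everything else is bookkeeping with the exact sequences already set up in Lemma~\ref{lem.prel-gc}.

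I expect the only potential subtlety to be the precise identification of the twisted torus ${\,}_\tau \bbG_m^n$ with the Weil restriction $R_{E/K}(\bbG_m)$ — one must be careful that the twisting cocycle $\tau = s_1(\gamma)$, coming from the splitting $s$, genuinely acts by permuting the $n$ coordinate copies of $\bbG_m$ and nothing more, which is immediate from the definition of the semidirect product in~\eqref{e.seq4}. Given that, the argument is short, and I would present it as: (1) reduce to injectivity of $\pi_1$ via Lemma~\ref{lem.prel-gc}(a); (2) apply Lemma~\ref{lem.prel-gc}(b) to reduce to the vanishing of $H^1(K, {\,}_\tau \bbG_m^n)$; (3) identify ${\,}_\tau \bbG_m^n$ with a product of Weil restrictions of $\bbG_m$ along finite separable field extensions of $K$; (4) conclude by Shapiro's lemma and Hilbert~90.
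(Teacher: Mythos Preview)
Your proposal is correct and follows essentially the same route as the paper: reduce via Lemma~\ref{lem.prel-gc} to showing $H^1(K, {\,}_{\tau}\bbG_m^n)=1$, observe that the twisted torus is quasi-trivial (a product of Weil restrictions of $\bbG_m$), and conclude by Shapiro's lemma and Hilbert~90. The paper compresses your steps (3)--(4) into a single appeal to the ``Faddeev--Shapiro Lemma'' \cite[\S I.2.5]{serre-gc}, but the content is identical.
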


\begin{proof} By Lemma~\ref{lem.prel-gc} it suffices to show that
\begin{equation} \label{e.quasi-trivial}
\text{$H^1(K, {\, }_\gamma (\bbG_m^n)) = 1$ for every $\gamma \in H^1(K, S)$.}
\end{equation}
Here $S$ acts on $\bbG_m^n$ by permuting the $n$ copies of $\bbG_m$. Thus the twisted group
${\, }_\gamma (\bbG_m^n)$ is a quasi-trivial torus,
and~\eqref{e.quasi-trivial} follows from the Feddeev-Shapiro Lemma~\cite[Section I.2.5]{serre-gc}.
\end{proof}

\section{Proof of Proposition~\ref{prop.gc}}
\label{sect.proof-gc}
% We now proceed with the proof of Proposition~\ref{prop.gc}. 

The implication (a) $\Longrightarrow$ (b) is obvious.

\smallskip
(b) $\Longrightarrow$ (c): If $L_n^S$ is unirational over $K_n^S$, then
$t_S$ has a rational section, and (c) follows.

\smallskip
(c) $\Longrightarrow$ (a): If $[t_S] =1 $ is the trivial class in $H^1(K_n^S, \PGL_2)$, then
$Y_S$ is birationally isomorphic to $Z_S \times \PGL_2$ over $Z_S$. Since the group variety of $\PGL_2$
is rational over $k$, this tells us that $Y_S$ is rational over $Z_S$. Equivalently,
$k(Y_S) = L_n^S$ is rational over $k(Z_S) = K_n^S$.

\smallskip
(c) $\Longrightarrow$ (d):
By \cite[Example I.5.4]{serre03}, $[T_S]$ is a versal $G_S$-torsor. This implies that if $[t_S] = \beta_1([T_S])$
is trivial in $H^1(K_n^S, \PGL_2)$, then the image of every element of $H^1(K, G_S)$ under $\beta_1$
is trivial in $H^1(K, \PGL_2)$ for every infinite field $K$ containing $k$, as desired. It remains to consider the case, where
$K$ is a finite field. By Wedderburn's ``little theorem" every quaternion algebra over a finite field $K$ is split. In view of Remark~\ref{rem.quaternion}, 
this translates to $H^2(K, \PGL_2) = \{ 1 \}$. We conclude that the map $H^1(K, G_S) \to H^1(K, \PGL_2)$ is trivial for every field $K$ 
containing $k$.

\smallskip
(d) $\Longrightarrow$ (c) is obvious.

\smallskip
(d) $\Longleftrightarrow$ (e): Follows from the fact that the sequence~\eqref{e.long2} is exact.

\smallskip
(e) $\Longleftrightarrow$ (f): Consider the group homomorphisms $ \xymatrix{S \ar@{->}[r]^{s \quad \; } & \bbG_m^n \rtimes S \ar@{->}[r]^{\; \; \; \; \alpha} & G_S \ar@{->}[r]^{\phi} &  S}$, 
whose composition is the identity map
$S \to S$. Note that here $\phi$, $\alpha$ and $s$ are the same as in~\eqref{e.seq1},~\eqref{e.seq2}, and \eqref{e.seq4}, respectively.
Let us examine the induced sequence 
\[  \xymatrix{  H^1(K, S) \ar@{->}[r]^{\simeq \quad \quad}_{s_1 \quad} \ar@{->}@/_2pc/[rrr]^{\id} &  H^1(K, \bbG_m^n \rtimes S) \ar@{->}[r]^{\quad \alpha_1} &
H^1(K, G_S) \ar@{->}[r]^{\phi_1} &  H^1(K, S)} \]
in cohomology. By Corollary~\ref{cor.prel-gc}, $s_1$ is an isomorphism. Thus 
$\alpha_1$ is surjective if and only of $\phi_1$ is bijective, as claimed.

\smallskip
(f) $\Longleftrightarrow$ (g): Immediate from Lemma~\ref{lem.prel-gc}, applied to the exact sequence~\eqref{e.seq1}.
\qed

\section{Reduction to the case, where $S$ is a $2$-group}

\begin{lemma} \label{lem.prel2} Let $P$ be a subgroup of $S$. Assume that the index $d = [S:P]$ is odd. Then 
$[t_S]$ is trivial in $H^1(K_n^S, \PGL_2)$ if and only if $[t_P]$ is trivial in $H^1(K_n^P, \PGL_2)$.
\end{lemma}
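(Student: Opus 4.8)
The statement relates the triviality of a $\PGL_2$-torsor over the function field $K_n^S$ to its counterpart over $K_n^P$, where $[S:P] = d$ is odd. The natural framework is a transfer (corestriction) argument. The plan is to compare the two fields directly: $K_n^S \subset K_n^P$ is a finite field extension of degree $d = [S:P]$, since $K_n^S = (K_n^P)^{?}$... more precisely, $L_n^P / L_n^S$ has degree $d$, and taking $\PGL_2$-invariants we get $[K_n^P : K_n^S] = d$ as well (the $\PGL_2$-action commutes with the $\Sym_n$-action, so invariants and the degree-$d$ extension are compatible). Under the restriction map $H^1(K_n^S, \PGL_2) \to H^1(K_n^P, \PGL_2)$, the class $[t_S]$ maps to $[t_P]$; this is essentially built into the construction of the torsors $t_S$ and $t_P$ from the same linear representation $V = (\bbA^2)^n$, restricted along $P \subset S$.

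First I would record the ``easy'' direction: if $[t_S]$ is trivial in $H^1(K_n^S, \PGL_2)$, then its restriction $[t_P]$ is trivial, since restriction is functorial and sends the trivial class to the trivial class. For the converse — the substantive direction — I would invoke the interpretation of $H^1(-, \PGL_2)$ in terms of quaternion algebras (Remark~\ref{rem.quaternion}). The class $[t_S]$ corresponds to a quaternion algebra $Q$ over $K_n^S$, and $[t_P]$ corresponds to $Q \otimes_{K_n^S} K_n^P$. If the latter is split, then $Q$ is split by a field extension of odd degree $d$. By a classical theorem (a quaternion algebra, or more generally any central simple algebra, split by an odd-degree extension is already split — this follows from the corestriction–restriction formula $\mathrm{cor} \circ \mathrm{res} = [d] = d \cdot (-)$ on the Brauer group, combined with the fact that $Q$ is $2$-torsion in $\Br(K_n^S)$, so $d \cdot [Q] = [Q]$), we conclude $Q$ is split, i.e.\ $[t_S]$ is trivial.

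The main obstacle is the bookkeeping that $[t_P]$ really is the image of $[t_S]$ under the restriction map $H^1(K_n^S, \PGL_2) \to H^1(K_n^P, \PGL_2)$ along $K_n^S \hookrightarrow K_n^P$. This requires checking that the $\PGL_2$-torsor $Y_P \to Z_P$ is obtained from $Y_S \to Z_S$ by base change along a dominant map $Z_P \dashrightarrow Z_S$ of degree $d$ (equivalently $\Spec K_n^P \to \Spec K_n^S$ at the generic point), compatibly with the $\PGL_2$-structures. Since both torsors arise from the same generically free $G_{(-)}$-action on $V = (\bbA^2)^n$ with $G_P \subset G_S$, and $V_0$ may be chosen $G_S$-invariant hence also $G_P$-invariant, one has $Z_P = V_0/(\bbG_m^n \rtimes P)$ mapping to $Z_S = V_0/(\bbG_m^n \rtimes S)$, and $Y_P = V_0/(\bbG_m^n \rtimes P)$... wait, $Y_P = V_0/(\bbG_m^n \rtimes P)$ equals $Z_P$? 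No: $Y_S = V_0/(\bbG_m^n \rtimes S)$ and $Z_S = V_0/G_S$; here $Z_P = V_0/G_P$ maps to $Z_S$, and $Y_P = V_0/(\bbG_m^n \rtimes P) = Y_S \times_{Z_S} Z_P$, since $G_S = (\bbG_m^n \rtimes S) \cdot (\text{stuff})$... Let me just say: the fibre product description $Y_P \cong Y_S \times_{Z_S} Z_P$ holds because passing from $S$ to $P$ only changes the $S$-part of the groups, not the $\PGL_2$-part, so the $\PGL_2$-torsor structure is pulled back. Granting this, the class $[t_P] \in H^1(K_n^P, \PGL_2)$ is exactly the restriction of $[t_S]$, and the odd-degree descent argument above finishes the proof. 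I would also remark that this reduces Theorem~\ref{thm.main} to the case where $S$ is a $2$-group, since every finite group $S$ has a Sylow $2$-subgroup $P$ of odd index, and an orbit of $S$ has odd order if and only if it has a sub-orbit under $P$ of odd order (or rather: one checks condition (c) is inherited appropriately — but that compatibility is a separate, purely group-theoretic point).
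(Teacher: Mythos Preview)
Your proposal is correct and follows essentially the same route as the paper: identify $[t_P]$ as the image of $[t_S]$ under restriction along the degree-$d$ inclusion $K_n^S \hookrightarrow K_n^P$ (which the paper encodes in a commutative diagram of quotients of $V_0$, equivalent to your fiber-product observation $Y_P \cong Y_S \times_{Z_S} Z_P$), and then use the quaternion-algebra interpretation together with the fact that a quaternion algebra split by an odd-degree extension is already split. Your $\mathrm{cor}\circ\mathrm{res} = d$ justification on the $2$-torsion Brauer class is a touch more explicit than the paper, which simply asserts the odd-degree splitting fact, but the arguments are otherwise the same.
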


\begin{proof} The diagram
\[  \xymatrix{  &  V_0 \ar@{->}[dl] \ar@{->}[dr] \ar@{->}@/_3pc/[ddl]_{T_P}  \ar@{->}@/^3pc/[ddr]^{T_S}  &  \\
Y_P    \ar@{->}[d]^{t_P}  \ar@{->}[rr]_{\text{deg $d$}}                                                        
&  & Y_S  \ar@{->}[d]_{t_S} \\
Z_P  \ar@{->}[rr]_{\text{deg $d$}}                              &       &   Z_S  } \]
shows that $[t_P]$ is the image of $[t_S]$ under the restriction map
\[ r: H^1(K_n^S, \PGL_2) \to H^1(K_n^P, \PGL_2). \]
By Proposition~\ref{prop.gc} it suffices to show
that $r$ has trivial kernel.
By Remark~\ref{rem.quaternion} elements of the Galois cohomology set $H^1(K, \PGL_2)$ can be 
identified with quaternion algebras over $K$ (up to $K$-isomorphism). The map $r$ sends a quaternion 
algebra $A$ over $K_n^S$ to the quaternion algebra $A \otimes_{K_n^S} K_n^P$ over $K_n^P$. 
Since $K_n^{P}/K_n^S$ is a field extension of odd degree, 
$A \otimes_{K_n^S} K_n^P$ is split if and only if $A$ is split. Thus $r$ has trivial kernel, as claimed.
\end{proof}

Combining Lemma~\ref{lem.prel2} with the equivalence of (a), (b), (c) in Proposition~\ref{prop.gc},
we see that for the purpose of proving Theorem~\ref{thm.main}, 
$S$ may be replaced its $2$-Sylow subgroup $P$. Note that $S$ has an orbit of odd order in $\{ 1, 2, \dots, n \}$ if
and only if $P$ has an orbit of odd order in $\{1, 2, \dots, n \}$ if and only if $P$ has a fixed point.
By the equivalence of parts (a), (b) and (g) in Proposition~\ref{prop.gc}, 
in order to complete the proof of Theorem~\ref{thm.main}, it suffices to establish the following.

\begin{proposition} \label{prop2}  Let $S$ be a $2$-subgroup of $\Sym_n$. Then the following conditions are equivalent.

\smallskip
{\rm (i)} $H^1(K, {\, } _{\tau} G) = 1$ for every $\tau \in H^1(K, S)$.

\smallskip
{\rm (ii)} $S$ has a fixed point in $\{ 1, \dots, n \}$.
\end{proposition}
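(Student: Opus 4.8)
The plan is to analyze the twisted group $_{\tau}G$ concretely. Recall that $G \subset (\GL_2)^n$ consists of tuples $(g_1,\dots,g_n)$ that are pairwise scalar multiples of each other, and that the exact sequence~\eqref{e.seq2} identifies $G$ with an extension of $\PGL_2$ by $\bbG_m^n$; explicitly, $G \cong (\bbG_m^n \rtimes \PGL_2)$ is \emph{not} a product, but there is the commutative-kernel sequence $1 \to \bbG_m^n \to G \to \PGL_2 \to 1$ in which $\PGL_2$ acts trivially on $\bbG_m^n$ (the scalar ambiguity is central). Since $S$ permutes the $n$ copies of $\bbG_m$ and commutes with the $\PGL_2$-quotient map, twisting by $\tau \in H^1(K,S)$ yields an exact sequence of $K$-groups
\[ 1 \longrightarrow {\,}_{\tau}(\bbG_m^n) \longrightarrow {\,}_{\tau}G \longrightarrow \PGL_2 \longrightarrow 1, \]
where $_{\tau}(\bbG_m^n)$ is the quasi-trivial torus $R_{E/K}(\bbG_m)$ associated to the étale $K$-algebra $E$ of degree $n$ determined by $\tau$ (a product of separable field extensions whose degrees are the sizes of the $\tau$-twisted ``orbits''). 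I would first make this identification precise, using that $H^1(K,S)$ classifies such étale algebras equipped with an $S$-structure, and that the twisted torus depends only on the underlying permutation $S$-set.

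Next I would run the long exact cohomology sequence attached to the displayed sequence. Because $_{\tau}(\bbG_m^n)$ is quasi-trivial, Hilbert 90 / the Faddeev--Shapiro lemma (as in Corollary~\ref{cor.prel-gc}) gives $H^1(K,{\,}_{\tau}(\bbG_m^n)) = 1$, so the sequence reads
\[ H^1(K,{\,}_{\tau}G) \longhookrightarrow H^1(K,\PGL_2) \xrightarrow{\ \partial\ } H^2(K,{\,}_{\tau}(\bbG_m^n)). \]
By Shapiro's lemma $H^2(K,{\,}_{\tau}(\bbG_m^n)) = \Br(E) = \bigoplus_j \Br(E_j)$ where $E = \prod_j E_j$. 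Moreover the connecting map $\partial$ sends the class of a quaternion algebra $A$ (Remark~\ref{rem.quaternion}) to $(A \otimes_K E_j)_j$: this is because the extension class of $1 \to \bbG_m \to \GL_2 \to \PGL_2 \to 1$ on each factor is the tautological one, and twisting by $S$ just reindexes the factors via the étale algebra. Hence $H^1(K,{\,}_{\tau}G) = 1$ if and only if $\partial$ is injective, i.e.\ if and only if: for every quaternion algebra $A/K$, the condition $A \otimes_K E_j$ split for all $j$ forces $A$ split.

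Now the proposition follows from elementary division-algebra bookkeeping, once one runs the quantifier over all $\tau$ correctly. If $S$ has a fixed point, then for \emph{every} $\tau$ one of the orbits (the one containing the fixed index, which remains a singleton after any twist since the action is trivial there) gives a factor $E_j = K$; then $A \otimes_K E_j = A$, so $\partial$ is injective and (i) holds. Conversely, if $S$ has no fixed point, then in $S$ every orbit on $\{1,\dots,n\}$ has even size (here $S$ is a $2$-group, so orbit sizes are powers of $2$, and ``no fixed point'' means all are $\geqslant 2$, hence even). Choose $\tau$ to be the class corresponding to the split $S$-set structure on the orbit decomposition with each orbit realized by a field extension of even degree — concretely, pick $K$ and quaternion algebra $A$ appropriately so that each $E_j/K$ is a field of even degree that splits $A$ while $A$ is nonsplit; e.g.\ take $K = k(a,b,t_1,\dots)$ a suitable rational function field, $A = (a,b)$ the generic quaternion algebra, and let each $E_j$ contain a quadratic subextension splitting $A$ (any even-degree extension containing $K(\sqrt a\,)$ works). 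Then $A \otimes_K E_j$ is split for all $j$ but $A$ is not, so $\partial$ is not injective and $H^1(K,{\,}_{\tau}G) \neq 1$, contradicting (i).

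The main obstacle I anticipate is the second paragraph's bookkeeping: making rigorous the identification of the twisted torus with $R_{E/K}(\bbG_m)$ and, more delicately, verifying that the connecting homomorphism $\partial$ is exactly the family of scalar-extension maps $A \mapsto (A\otimes_K E_j)_j$ rather than some twisted variant. This requires being careful that the central extension $1 \to \bbG_m^n \to G \to \PGL_2 \to 1$ has $\PGL_2$ acting trivially on $\bbG_m^n$ and that twisting by $\tau \in H^1(K,S)$ (which acts through permutations only) does not alter the cohomology class of the extension beyond permuting/inducing the factors. Once that is pinned down, the forward implication (i)$\Rightarrow$(ii) via the contrapositive needs only a clean choice of field $K$ exhibiting a nonsplit quaternion algebra split by every even-degree factor — for which it is cleanest to take the versal/generic situation, or simply $K = k(x,y)$ with $A = (x,y)$ and $E_j \supseteq K(\sqrt{x}\,)$, so no subtlety remains.
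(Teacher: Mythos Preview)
Your cohomological setup is essentially the paper's: both identify $H^1(K,{\,}_{\tau}G)$ with the set of quaternion $K$-algebras split by every factor $E_j$ of the \'etale algebra $E$ attached to $\tau$. The paper obtains this by writing $G \simeq (\GL_2 \times \bbG_m^n)/\Delta$ and citing \cite[Lemma~5.1]{fr}; you obtain it from the long exact sequence of the twisted central extension $1 \to R_{E/K}(\bbG_m) \to {\,}_{\tau}G \to \PGL_2 \to 1$, which is a clean alternative. The implication (ii)$\Rightarrow$(i) is handled the same way in both.

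The genuine gap is in your construction for $\neg$(ii)$\Rightarrow\neg$(i). You propose to take $A=(a,b)$ and arrange that every $E_j$ contain $K(\sqrt{a}\,)$. But the $E_j=L^{S_j}$ all arise from a \emph{single} $S$-Galois algebra $L/K$, so (when $L$ is a field) the condition $K(\sqrt{a}\,)\subseteq L^{S_j}$ forces every stabilizer $S_j$ to lie in the one index-$2$ subgroup of $S$ cut out by $K(\sqrt{a}\,)$. That can fail: for $S=(\bbZ/2)^2$ acting on $\{1,\dots,6\}$ with three orbits of size $2$ whose stabilizers are the three distinct index-$2$ subgroups, no single index-$2$ subgroup contains them all. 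One then needs a quaternion \emph{division} algebra split by all three quadratic subfields of a biquadratic $L=K(\sqrt{d_1},\sqrt{d_2})$, and this is not automatic from splitting by two of them (e.g.\ $(-1,-1)/\bbQ$ is split by $\bbQ(\sqrt{-1})$ and $\bbQ(\sqrt{-2})$ but not by $\bbQ(\sqrt{2}\,)$). The paper closes exactly this gap by first choosing, for each $i$, an index-$2$ subgroup $H_i\supseteq S_i$ and an $S$-Galois field extension $M/F$, and then invoking the Van den Bergh--Schofield theorem \cite[Theorem~3.8]{vdbs} to produce, after base change to some $K/F$, a quaternion division algebra over $K$ containing every $M^{H_i}\otimes_F K$ as a maximal subfield. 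Your sketch is missing an ingredient of this strength.
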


\section{Conclusion of the proof of Theorem~\ref{thm.main}}

In this section we will complete the proof of Theorem~\ref{thm.main} by establishing Proposition~\ref{prop2}.

Denote the orbits of $S$ in $\{ 1, \dots, n \}$ by $\cO_1, \dots, \cO_t$ where $\cO_i \simeq S/S_i$ as a $G$-set.
Here $S_i$ is the stabilizer of a point in $\cO_i$. The groups $S_1, \dots, S_t$ are uniquely determined by the embedding 
$S \hookrightarrow \Sym_n$ up to conjugacy and reordering. Note that $S_1, \dots, S_t$ may not be distinct.

Recall that elements of $\tau \in H^1(K, S)$ are in a natural bijective correspondence with $S$-Galois algebras $L/K$.
Here by an $S$-Galois algebra $L/K$ we mean an \'etale algebra (i.e., a direct sum of finite separable field extensions of $K$)
equipped with a faithful action of $S$ such that $\dim_K(L) = |S|$ and $L^S = K$; see~\cite[Example 2.2]{serre03}. 
To an $S$-Galois algebra $L/K$ one can naturally associate the \'etale $K$-algebra 
\[ E = L^{S_1} \times \dots \times L^{S_t} \]
of degree $n$.

Now observe that the group $G$ (defined at the beginning of Section~\ref{sect.recast}) admits the following alternative description.
Consider the natural surjective map $f \colon \GL_2 \times \bbG^m \to G$ given by 
\[ (g, t_1, \dots, t_n) \to (gt_1, gt_2, \dots, g t_n) . \]
The kernel of $f$ is $\Delta = \{ (t I_2, t, \ldots, t) \, | t \in \bbG_m \} \simeq \bbG_m$. Thus $f$ induces an isomorphism 
$G \simeq (\GL_2 \times \bbG_m^n)/\Delta$ of algebraic groups. Moreover, this
isomorphism is $S$-equivariant with respect to the natural actions of $S$ on $G$ (described at the beginning of section~\ref{sect.recast})
and $(\GL_2 \times \bbG_m^n)/\Delta$ (via permuting the $n$ components of $\bbG_m$).
The twisted forms $_{\tau} G$ of $G$ and the Galois cohomology sets $H^1(K, {\, }_{\tau} G)$  are explicitly described in~\cite{fr}. In particular,
if $\tau \in H^1(K, S)$ corresponds to the $n$-dimensional \'etale $K$-algebra $E$ as above,
then ${\, }_{\tau} G \simeq (\GL_2 \times R_{E/K}(\bbG_m))/\Delta_K$, where $R_{E/K}$ denotes Weil restriction and $\Delta_K \simeq {\, }_{\tau} \Delta$
is $\bbG_m$ (over $K$), diagonally embedded into $\GL_2 \times R_{E/K}(\bbG_m)$; see~\cite[Section 4]{fr}.
Moreover,
\[ H^1(K, {\, }_{\tau} G) \simeq \; \{ \text{isom.~classes of quaternion $K$-algebras $A$ such that $A$ is split by $E \otimes_k K \}$;} 
\]
see~\cite[Lemma 5.1]{fr}. Note that $A$ is split by $E \otimes_k K$ if and only if $A$ is split by $L^{S_i} \otimes_k K$ for every $i = 1, \dots, t$.
This explicit description of $H^1(K, {\, }_{\tau} G)$ reduces Proposition~\ref{prop2} to the following equivalent form.

\begin{proposition} \label{prop3} 
Let $S \subset \Sym_n$ be a $2$-group. Then the following conditions on $S$ are equivalent.

\smallskip
(a) There exists a field a field extension $K/k$, a quaternion division algebra $A/K$ and $S$-Galois algebra $L/K$ such that
$A$ splits over $L^{S_i}$ for every $i = 1, \dots, t$.

\smallskip
(b) $S$ does not have a fixed point in $\{ 1, \dots, n \}$ .
\end{proposition}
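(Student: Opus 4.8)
\textbf{Proof plan for Proposition~\ref{prop3}.}

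The plan is to prove the two directions separately, with the implication (b) $\Longrightarrow$ (a) being the substantive one.

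For (a) $\Longrightarrow$ (b), I would argue by contraposition. Suppose $S$ has a fixed point, say the point labelled $i$, so that $S_i = S$ and hence $L^{S_i} = L^S = K$. Then condition (a) would force a quaternion division algebra $A/K$ to split over $K$ itself, which is absurd. So no such $A$ exists, and (a) fails. This is immediate and requires no calculation.

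For (b) $\Longrightarrow$ (a), the idea is to produce the required data explicitly by a versality argument, exactly parallel to the role played by $[T_S]$ in Section~\ref{sect.recast}. Since $S$ has no fixed point, every stabilizer $S_i$ is a \emph{proper} subgroup of the $2$-group $S$; hence each $S_i$ is contained in a subgroup of index $2$ in $S$, and in particular $S$ admits a surjection onto $\bbZ/2\bbZ$ that is nontrivial on... — more carefully, for each $i$ there is an index-$2$ subgroup $S_i' \supseteq S_i$, and $L^{S_i} \supseteq L^{S_i'}$, so it suffices to split $A$ over each \emph{quadratic} subextension $L^{S_i'}/K$. Thus the natural candidate is to take $A$ to be a quaternion algebra whose splitting is governed by a single quadratic étale algebra that maps into all of them — but the $S_i'$ need not have a common index-$2$ overgroup. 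Instead, the clean approach is: take $K$ to be the function field of a versal $S$-torsor (equivalently, $K = k(x_1,\dots,x_m)^S$ for a faithful linear action, or one may simply take $K = K_n^S$ and $L$ the corresponding $S$-Galois algebra as constructed in Section~\ref{sect.recast}), so that the $S$-Galois algebra $L/K$ is versal; then by the description of $H^1(K, {\,}_\tau G)$ recalled above, it suffices to show $H^1(K, {\,}_\tau G) \ne 1$ for this particular versal $\tau$, and by versality this will follow once we exhibit \emph{some} field extension $K'/K$ over which ${\,}_{\tau_{K'}} G$ has nontrivial $H^1$, i.e., some quaternion algebra split by all the $L^{S_i} \otimes_K K'$ but not split itself.

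So the heart of the matter reduces to: given a $2$-group $S$ acting on $\{1,\dots,n\}$ with no fixed point, construct a field $F$, a quaternion division algebra $A/F$, and an $S$-Galois algebra $L/F$ such that $A$ is split by $L^{S_i}$ for all $i$. Here is the construction I would use. Since each $S_i$ is proper in the $2$-group $S$, pick for each $i$ an index-$2$ subgroup $N_i$ with $S_i \subseteq N_i$. The subgroups $N_i$ correspond to surjective characters $\chi_i \colon S \to \bbZ/2\bbZ$. Now let $W = \prod_{i=1}^t \bbZ/2\bbZ$ with $S$ acting via $(\chi_1,\dots,\chi_t)$, and form the field $F_0 = k(S)$ of a versal $S$-torsor together with the induced data; over $F_0$ the Galois algebra $L_0/F_0$ gives quadratic subextensions $k_i := L_0^{N_i}$, each split by $L_0^{S_i}$ since $S_i \subseteq N_i$. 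One then wants a quaternion algebra over some extension $F$ of $F_0$ of the form $(a, b)$ that becomes split over each $k_i \otimes_{F_0} F$ but not over $F$. The natural move is to add new independent variables: take $F = F_0(u)$ and $A = (d, u)$ where $d \in F_0^\times$ is chosen so that $\sqrt{d}$ generates the compositum behaviour we need, i.e., $d$ is a norm from each $k_i$; more precisely, if the classes $[k_i] \in F_0^\times/(F_0^\times)^2$ generate a subgroup on which one can find a common element, use that — but in general the $[k_i]$ are $\bbF_2$-independent (by versality/genericity), so instead take $A$ to be split by a quadratic algebra $k_0$ only if \emph{every} $[k_i]$ were equal to $[k_0]$, which fails.

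The correct and clean resolution, which I expect the authors use, is to realize that splitting by an étale algebra $E$ is a condition on the \emph{index}: $A$ is split by $L^{S_i}$ iff $[A]$ lies in the kernel of restriction to $\Br(L^{S_i})$, and by a corestriction/transfer argument in Galois cohomology with $\bbZ/2$ coefficients — using that all $S_i$ are contained in index-$2$ subgroups $N_i$ — the intersection $\bigcap_i \ker\big(\Br(F)[2] \to \Br(L^{N_i})\big)$ consists of classes killed by corestriction from each $k_i$, and one shows this intersection is nonzero over a suitable $F$ by a counting/dimension argument on symbols, or by exhibiting an explicit symbol. Concretely: choose $F = k(t_1,\dots,t_t, u)$, let $L$ be the $S$-Galois algebra whose character data is $(\chi_1,\dots,\chi_t)$ composed with $t_i \mapsto \sqrt{t_i}$ so that $k_i = F(\sqrt{t_i})$, and set $A = \big(t_1 t_2 \cdots t_t,\ u\big)$ if $t$ is... no — the slick choice is $A = (t_1, u) \otimes (t_2, u) \otimes \cdots$, which has exponent $2$ hence is Brauer-equivalent to a quaternion algebra only when... this forces care. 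I would therefore, in writing the final proof, reduce further: since all $S_i \subseteq N_i$ index $2$, it suffices to treat the \emph{universal} such configuration and then invoke \cite{fr}'s description once more, concluding that the needed quaternion algebra is $\big(\prod \text{appropriate } t_i,\ u\big)$ over a rational function field, whose nonsplitting is checked by a residue at $u$.

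\textbf{Main obstacle.} The real difficulty is the construction in direction (b) $\Longrightarrow$ (a): producing a single quaternion division algebra simultaneously split by the several, generally ``independent'', subfields $L^{S_i}$. The point that makes it possible is that each $S_i$, being proper in a $2$-group, sits inside an index-$2$ subgroup, so each $L^{S_i}$ contains a \emph{quadratic} subextension $k_i/K$; a quaternion algebra $(a,b)$ is split by $k_i = K(\sqrt{c_i})$ precisely when $c_i$ is represented by the norm form, and one must choose $a,b$ (over a large enough function field $K$) so that this holds for all $i$ at once while $(a,b)$ stays nonsplit. I expect the authors handle this with an explicit symbol over a rational function field plus a ramification computation, and that is the step I would spend the most care on; everything else (the reduction to versal torsors, the translation via \cite{fr}, the trivial direction) is bookkeeping already set up in the preceding sections.
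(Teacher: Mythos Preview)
Your direction (a) $\Longrightarrow$ (b) is exactly the paper's argument. Your setup for (b) $\Longrightarrow$ (a) is also correct and matches the paper: since each stabilizer $S_i$ is proper in the $2$-group $S$, it is contained in an index-$2$ subgroup $H_i \lhd S$, so it suffices to produce a quaternion division algebra $A/K$ split by each quadratic subalgebra $L^{H_i}$.

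The genuine gap is what happens next. You never actually construct such an $A$. Your symbol candidates $(\prod_i t_i,\,u)$ and $\bigotimes_i (t_i,u)$ do not work: the first is split by $K(\sqrt{\prod t_i})$ but not in general by the individual $K(\sqrt{t_i})$, and the second has index $2^t$, not $2$, when the $t_i$ are generic. Splitting $(a,b)$ over $K(\sqrt{c_i})$ is equivalent to embedding $K(\sqrt{c_i})$ into $(a,b)$ as a maximal subfield, so what you are really asking for is a single quaternion division algebra containing \emph{prescribed, unrelated} quadratic subfields $k_1,\dots,k_t$. Over the base field this is typically impossible (a quaternion algebra has only three square classes of splitting quadratic extensions in the generic situation), so a residue computation over a rational function field in one or two variables will not produce it. Your versality detour is also unnecessary: condition~(a) only asks for existence over \emph{some} $K$, so there is no need to pass through a versal torsor and specialize.

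The paper resolves this by invoking a theorem of Van den Bergh and Schofield \cite[Theorem~3.8]{vdbs}: given separable quadratic extensions $k_1,\dots,k_t$ of a field $F$, there is a field extension $K/F$ and a quaternion division algebra $A/K$ containing each $k_i\otimes_F K$ as a maximal subfield. One starts with any $S$-Galois field extension $M/F$, applies the theorem to the $k_i = M^{H_i}$, and sets $L = M\otimes_F K$; then $L^{S_i}\supseteq L^{H_i}=k_i\otimes_F K$ splits $A$. This single citation is the missing ingredient you were circling around; without it (or an equivalent embedding result) the construction in your proposal does not close.
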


To prove (a) $\Longrightarrow$ (b), assume that $S$ has a fixed point. That is, one of the orbits of $S$ in $\{ 1, \dots, n \}$, 
say $\cO_1$, consists of a single point. Equivalently, $S_1 = S$.  Clearly a quaternion division algebra 
over $K$, cannot split over $L^{S_1} = L^S = K$.

To prove (b) $\Longrightarrow$ (a), assume $S$ does not have a fixed point in $\{ 1, 2, \dots, n \}$, i.e.,
$|\cO|_i = |S/S_i| \geq 2$ for every $i = 1, \dots, t$. 
Since $S$ is a $2$-group, each $S_i$ is contained in a maximal
proper subgroup of $S$. That is, for each $i = 1, \dots, t$, there exists a subgroup
\[ \text{$S_i \subseteq H_i \subsetneq S$ such that $[S: H_i] = 2$.} \]
Note that $H_i$, being a subgroup of index $2$, is normal in $S$.

Now let $M/F$ be an $S$-Galois field extension. For example, we can let $S$ act on $M = k(x_1, \dots, x_n)$ by permuting the variables
and set $F = M^S$. Since $H_i$ is normal in $S$, $M^{H_i}/F$ is a Galois extension of degree $2$ for each $i = 1, \dots, t$. 
By a theorem of M.~Van den Bergh\ and\ A.~Schofield~\cite[Theorem 3.8]{vdbs}, there exists a field extension $K/F$ and a quaternion 
division algebra $A/K$ such that $A$ contains $M^{H_i} \otimes_{F} K$ as a maximal subfield for each $i = 1, \dots t$.
Now consider the $S$-Galois algebra $L = M \otimes_F K$ over $K$. For each $i = 1, \dots, t$, $L^{S_i}$ contains $L^{H_i} = M^{H_i} \otimes_F K$.
Hence, each $L^{S_i}$, splits $A$, as desired. 

This completes the proof of Proposition~\ref{prop3} and thus of Proposition~\ref{prop2} and Theorem~\ref{thm.main}. 
\qed

\section*{Acknowledgments} The author is grateful to Skip Garibaldi for helpful comments.

\end{document}